\newtheorem{thm}{Theorem}[section]
\newtheorem{lem}[thm]{Lemma}
\newtheorem{defn}[thm]{Definition}
\newtheorem{constr}[thm]{Construction}
\newcommand{\NN}{\mathbb{N}}
\newcommand{\RNum}[1]{({\uppercase\expandafter{\romannumeral #1\relax}})}
\newcommand{\tr}[1]{{\rm trace \,}}
\newcommand{\keywords}[1]{\textbf{\textit{Keywords:}}#1}
\begin{document}
\title{A New Method for Constructing Circuit Codes}
\author{Kevin M. Byrnes
\thanks{E-mail:\texttt{dr.kevin.byrnes@gmail.com}}}
\maketitle

\begin{abstract}
Circuit codes are constructed from induced cycles in the graph of the $n$ dimensional hypercube.  They are both theoretically and practically important, as circuit codes can be used as error correcting codes.  When constructing circuit codes, the length of the cycle determines its accuracy and a parameter called the spread determines how many errors it can detect.  We present a new method for constructing a circuit code of spread $k+1$ from a circuit code of spread $k$.  This method leads to record code lengths for circuit codes of spread $k=7 \text{ and } 8$ in dimension $22\le n\le 30$.  We also derive a new lower bound on the length of circuit codes of spread 4, improving upon the current bound for dimension $n\ge 86$.

\end{abstract}

\keywords{Circuit Code, Snake in the Box, Coil in the Box, Error Correcting Code}

\section{Introduction}
Let $I(n)$ denote the graph of the $n$ dimensional hypercube, that is the graph on $2^n$ vertices where each vertex corresponds to a binary vector of length $n$, and two vertices $x$ and $x'$ are adjacent if and only if their binary vectors differ in exactly one position.  For any vertex induced subgraph $G$ of $I(n)$ and any two vertices $x,x'\in G$ we define the distance $d_G(x,x')$ as the minimum number of edges in $G$ needed to travel from $x$ to $x'$.  If there is no path in $G$ from $x$ to $x'$ then $d_G(x,x')=\infty$.  Observe that $d_{I(n)}(x,x')$ equals the number of positions where the binary vectors corresponding to $x$ and $x'$ differ.

\begin{defn}
An induced subgraph $C$ of $I(n)$ is a circuit code of spread $k$ (an $(n,k)$ circuit code) if:
\begin{enumerate}
\item $C$ is a circuit (i.e. an undirected cycle).
\item If $x$ and $x'$ are vertices of $C$ with $d_{I(n)}(x,x')<k$ then $d_C(x,x')=d_{I(n)}(x,x')$.
\end{enumerate}
\end{defn}

An equivalent characterization of circuit codes was proven by Klee.
\begin{lem}[Klee \cite{Klee} Lemma 2]
\label{Fact2}
An $n$-dimensional circuit code $C$ of length $N\ge 2k$ has spread $k$ if and only if for all vertices $x,x'\in C$, ${d_C(x,x')\ge k \Rightarrow d_{I(n)}(x,x')\ge k}$.
\end{lem}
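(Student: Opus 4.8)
The plan is to prove the two implications separately, using throughout the elementary fact that any path in the subgraph $C$ is also a path in $I(n)$, so $d_{I(n)}(x,x')\le d_C(x,x')$ for every pair $x,x'\in C$. The forward implication is immediate and does not even need $N\ge 2k$: if $C$ has spread $k$ and $d_C(x,x')\ge k$, then $d_{I(n)}(x,x')<k$ would force $d_C(x,x')=d_{I(n)}(x,x')<k$ by the spread condition, a contradiction, so $d_{I(n)}(x,x')\ge k$. The substance is the converse, so assume the stated condition, which I will abbreviate as $(\ast)$: for all $x,x'\in C$, $d_C(x,x')\ge k\Rightarrow d_{I(n)}(x,x')\ge k$.

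The key step for the converse is a ``windowing'' argument on consecutive vertices. Fix any $k+1$ consecutive vertices $v_0,v_1,\dots,v_k$ of the cycle $C$. Because $N\ge 2k$, the two arcs joining $v_0$ to $v_k$ have lengths $k$ and $N-k\ge k$, so $d_C(v_0,v_k)=k$. Applying $(\ast)$ gives $d_{I(n)}(v_0,v_k)\ge k$; but the arc $v_0,\dots,v_k$ is a path of only $k$ edges, so $d_{I(n)}(v_0,v_k)\le k$, and hence $d_{I(n)}(v_0,v_k)=k$. Since each edge of the cycle flips exactly one coordinate, a run of $k$ edges that realizes Hamming distance exactly $k$ must flip $k$ \emph{distinct} coordinates; consequently no coordinate is repeated within any window of $k$ consecutive edges of $C$.

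To finish, take any $x,x'\in C$ with $d_{I(n)}(x,x')<k$. The contrapositive of $(\ast)$ yields $d_C(x,x')<k$, so (again using $N\ge 2k$ to identify the shorter arc) the shorter arc between $x$ and $x'$ consists of exactly $m:=d_C(x,x')<k$ edges. This run of fewer than $k$ consecutive edges lies inside some window of $k$ consecutive edges, so by the previous paragraph its $m$ edges flip $m$ distinct coordinates; therefore $d_{I(n)}(x,x')=m=d_C(x,x')$, which is precisely the spread-$k$ condition.

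I expect the main obstacle to be the windowing step, and specifically the observation that $d_C(v_0,v_k)=k$ \emph{exactly}. This is the precise point where the hypothesis $N\ge 2k$ is indispensable: it is what makes $(\ast)$ applicable to a pair whose connecting path has the minimum possible number of edges, thereby forcing the rigid ``all coordinates distinct'' conclusion. A secondary subtlety worth flagging is that one must first invoke $(\ast)$ to guarantee $d_C(x,x')<k$ before the windowing applies, since a shorter arc of length $\ge k$ would not fit inside a single $k$-edge window; the argument genuinely needs the cycle distance to be controlled first.
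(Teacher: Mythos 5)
Your proof is correct, and it is worth noting that the paper itself offers no proof to compare against: the lemma is imported verbatim from Klee \cite{Klee} (his Lemma 2) with only a citation. Your forward direction is the immediate contrapositive of the spread condition, and your converse is the standard route. The key windowing observation --- that under $(\ast)$ and $N\ge 2k$ every $k$ cyclically consecutive edges of $C$ flip $k$ distinct coordinates --- is precisely the transition-sequence distinctness fact the paper quotes separately as Lemma \ref{Fact1} (Singleton), except that you derive it from the hypotheses actually available here ($N\ge 2k$ and windows of $k$ rather than $k+1$ transitions), which is exactly what the equivalence needs; your argument then correctly embeds the shorter arc (of length $m<k$, guaranteed by the contrapositive of $(\ast)$) inside such a window to force $d_{I(n)}(x,x')=m=d_C(x,x')$. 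Your diagnosis of where $N\ge 2k$ is indispensable is also accurate: if $N<2k$, the arc of $k$ edges would not be the shorter one ($N-k<k$), $d_C(v_0,v_k)$ would fall below $k$, and $(\ast)$ could not be invoked to rigidify the window. One small definitional point: the paper's Definition~1 requires a circuit code to be an \emph{induced} cycle; for $k\ge 2$ your argument recovers this automatically (apply your final step to a pair with $d_{I(n)}(x,x')=1$ to get $d_C(x,x')=1$), whereas for $k=1$ both the spread condition and $(\ast)$ are vacuous, so there induced-ness must simply be read as part of the hypothesis ``circuit code.''
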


Finding long circuit codes is practically and theoretically important, since circuit codes can be used as error-correcting codes\cite{Kautz}.  Circuit codes of spread 1 are known as Gray codes\cite{Gray}, and circuit codes of spread 2 are known as snakes (or coils)\cite{Kautz}, both of these have been extensively studied.  Let $K(n,k)$ denote the maximum length of an $(n,k)$ circuit code, it is well-known that $K(n,1) = 2^n$ and $K(n,2)\ge \frac{77}{256}2^n$\cite{AK}.  In contrast, circuit codes of spread $k\ge 3$ are less-well understood and exact values for $K(n,k)$ are generally only known for $n\le 17$ and $k\le 7$ and some special $(n,k)$ pairs.

In this note we present a simple new construction for generating a circuit code of spread $k+1$ from a circuit code of spread $k$.  This allows the better studied codes of smaller spreads to be leveraged to create codes of larger spreads, and results in several new records for codes of spread 7 and 8, and dimension $22\le n\le 30$.  Specifically, we prove the following theorem.

\begin{thm}
\label{Thm1}
Let $C$ be an $(n,k)$ circuit code with length $N\ge 2(k+1)$.  Then there exists an $(n+r,k+1)$ circuit code $C'$ with length $N'=N+2\lceil \frac{N}{2(k+1)} \rceil$, where $r=\lceil \log_2 \frac{N}{2(k+1)}\rceil+1$.
\end{thm}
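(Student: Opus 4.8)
The plan is to pass to the language of transition sequences. Recall that a circuit code $C$ of length $N$ in $I(n)$ is encoded by a cyclic sequence $\tau=(t_1,\dots,t_N)$ with $t_i\in\{1,\dots,n\}$, where consecutive vertices differ in coordinate $t_i$, and $C$ closes up precisely when each coordinate occurs an even number of times in $\tau$. If two vertices are separated along the cycle by a contiguous window $W$ of transitions, then their hypercube distance equals the \emph{weight} $w(W)$, the number of coordinates occurring an odd number of times in $W$, while their cycle distance is $\min(|W|,N-|W|)$. Thus by Lemma~\ref{Fact2}, $C$ has spread $k$ exactly when every window $W$ with $k\le|W|\le N-k$ has $w(W)\ge k$. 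I would first isolate two elementary facts: (i) the \emph{parity identity} $w(W)\equiv|W|\pmod 2$ (both sides reduce modulo $2$ to the total number of transitions in $W$); and (ii) in a spread-$k$ code every window of length at most $k$ consists of pairwise distinct coordinates, so $w(W)=|W|$ there. Fact (i) is the crucial leverage: a spread-$k$ window whose length has opposite parity to $k$ must have $w(W)\ge k+1$, not merely $\ge k$.

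For the construction, set $m=\lceil N/(2(k+1))\rceil$, so the target length is $N'=N+2m$ and $r=\lceil\log_2(N/(2(k+1)))\rceil+1$ new coordinates $n+1,\dots,n+r$ are available; since $2^{r-1}$ is the least power of two that is at least $N/(2(k+1))$, we have $2^{r-1}\ge m$ and hence $2^r\ge 2m$. I would then cut $\tau$ into $2m$ consecutive arcs $A_1,\dots,A_{2m}$ of lengths as equal as possible, each $\lfloor N/(2m)\rfloor$ or $\lceil N/(2m)\rceil$ and hence at most $k+1$, and form $\tau'$ by inserting a single new transition $c_i$ after each arc $A_i$. The inserted symbols $(c_1,\dots,c_{2m})$ are chosen to be the transition sequence of an induced cycle of length $2m$ in $I(r)$, a sub-cycle of a reflected Gray code, which exists precisely because $2^r\ge 2m$. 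As that cycle is closed, each new coordinate occurs an even number of times, so $\tau'$ is again closed and determines a closed walk $C'$ in $I(n+r)$ of length $N'\ge N\ge 2(k+1)$.

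The heart of the argument is to show that every window $W'$ of $\tau'$ with $k+1\le|W'|\le N'-(k+1)$ has $w(W')\ge k+1$; combined with the (easier, parallel) fact that $C'$ is an induced cycle, Lemma~\ref{Fact2} then yields spread $k+1$. Writing $W'$ as its old part $W$, a contiguous window of $\tau$ of length $\ell$, together with a contiguous block $B$ of $s$ inserted symbols, disjointness of the coordinate sets gives $w(W')=w(W)+w(B)$. Since consecutive inserted symbols are separated by a whole arc, any window of length $\ge k+1$ lying inside a single arc has length exactly $k+1$ and is handled by the parity identity; every other window of length $\ge k+1$ crosses a junction, so $s\ge1$. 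I would then split on $\ell$: when $\ell\ge k$ the spread-$k$ hypothesis gives $w(W)\ge k$, and either the parity identity upgrades this to $w(W)\ge k+1$, or else $w(B)\ge1$ because $B$ is a nonempty proper block of an induced cycle; either way $w(W')\ge k+1$.

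The main obstacle is the remaining regime, where the window packs several inserted symbols around few original ones ($\ell\le k-1$, $s\ge2$): here $w(W)=\ell$ is small by fact (ii), and the bound must come almost entirely from $w(B)\ge k+1-\ell$. The key observation is that the $s-1$ arcs strictly between the inserted symbols of $B$ contribute their full lengths to $\ell$, so a small $\ell$ forces those interior arcs to be short; choosing the arcs as equal as possible therefore caps $s$, and I would verify that the inserted Gray sub-cycle has enough spread to supply $w(B)\ge k+1-\ell$ over exactly this bounded range of $s$. Quantifying this trade-off between arc length, the number $s$ of clustered insertions, and the spread of the inserted cycle is where the real work lies; the extreme window with $s=2m$ is disposed of separately by the complementary-window trick, since the complement then contains no inserted symbol, lies in a single arc of length $k+1$, and already has weight $\ge k+1$ by parity.
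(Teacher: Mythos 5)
Your window-and-weight framework is sound and close in spirit to the paper's argument (your parity identity and fact (ii) are Lemma \ref{Fact1}, and restricting to the shorter window plays the role of Lemma \ref{ShortSubsequence}), but there is a genuine gap exactly where you say ``the real work lies,'' and the object you propose to fill it with does not exist. An \emph{induced} cycle of length $2m$ in $I(r)$ does not exist ``precisely because $2^r\ge 2m$'': induced cycles in hypercubes are coils (snake-in-the-box cycles), whose maximum length is far below $2^r$ for $r\ge 3$ (it is $6$ in $I(3)$ and $8$ in $I(4)$), while the theorem's choice of $r$ can force $m=2^{r-1}$ (take $N=2(k+1)2^{r-1}$), which would demand an induced Hamiltonian cycle of $I(r)$. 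Sub-cycles of the reflected Gray code are not induced either: in $I(3)$ the Gray cycle $000,001,011,010,110,111,101,100$ has the chord $\{000,010\}$, so a block $B$ of $s=3$ consecutive inserted symbols can already have $w(B)=1$. Consequently no Gray-type inserted cycle can by itself supply the bound $w(B)\ge k+1-\ell$ you need in the clustered regime $\ell\le k-1$, $s\ge 2$; that bound must come from the interior arcs being long, and you never do that arithmetic.

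This is precisely what the paper's Construction \ref{Construction7} is engineered for: the segments have length exactly $k+1$ (only the last segment of each half may be short), so a window containing at most $k$ old transitions can contain at most two inserted symbols ($|A|\le k\Rightarrow|B|\le 2$), and the inserted pattern --- the doubled ruler sequence, i.e.\ a Gray path on $s_1,\dots,s_{r-1}$ closed twice by $s_r$, a \emph{simple but not induced} cycle --- is only required to satisfy three weak properties: consecutive insertions are distinct, the maximum-index symbol inside a block of one half occurs exactly once, and the two occurrences of $s_r$ are antipodal in $T'$. Your variant can likely be repaired along the same lines: weaken ``induced'' to ``simple,'' and check that with arcs as equal as possible one has $2\lfloor N/(2m)\rfloor\ge k+1$ for $m\ge 2$, so that $s\ge 3$ forces $\ell\ge k+1$ and the clustered regime collapses to $s=2$, where $w(B)=2$ follows from simplicity alone (consecutive transitions of a simple cycle differ). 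But as written the construction is under-specified and the central estimate is asserted rather than proved. A smaller omission: your step ``$\ell\ge k$ gives $w(W)\ge k$'' uses the spread hypothesis, which applies only when $k\le\ell\le N-k$, so windows with $\ell>N-k$ also require the complementation trick, not only the extreme case $s=2m$; this is fixable by assuming $|W'|\le N'/2$ throughout, which is the window-language analogue of the paper's Lemma \ref{ShortSubsequence}.
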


A useful consequence of Theorem \ref{Thm1} is a new lower bound on $K(n,4)$ which improves upon the current lower bound when $n\ge 86$.
\begin{thm}
\label{Thm2}
For $n\ge 6, K(\lfloor 1.53 n\rfloor,4)\ge 40\cdot 3^{(n-8)/3}$, and hence $K(n,4)\ge 40\cdot 3^{(\lfloor .6535n \rfloor-8)/3}$.
\end{thm}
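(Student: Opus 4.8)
The plan is to feed long spread-$3$ codes into Theorem~\ref{Thm1} with $k=3$, which raises the spread from $3$ to $4$ at the cost of only $r=\lceil\log_2\frac{N}{8}\rceil+1$ additional dimensions. The constant $1.53$ should emerge because $1+\frac{1}{3}\log_2 3=1.5283\ldots$, so adding a spread-$3$ code's own dimension $n$ to the logarithmic overhead $r$ lands essentially on $\lfloor 1.53\,n\rfloor$. First I would secure a family of spread-$3$ circuit codes of length at least $40\cdot 3^{(n-8)/3}$ in dimension $n$, for all $n\ge 6$. This can be assembled from explicit base codes, one in each residue class $n\equiv 0,1,2\pmod 3$, propagated by a tripling recursion $K(n+3,3)\ge 3K(n,3)$; the point is that this produces codes of \emph{controlled} length $L_0\cdot 3^{j}$ (with $L_0\cdot 3^{j}\ge 40\cdot 3^{(n-8)/3}$), which matters because the dimension overhead in Theorem~\ref{Thm1} depends on the exact input length, not merely on a lower bound for it.

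Next I would apply Theorem~\ref{Thm1} with $k=3$ to such a code $C$ of length $N=L_0\cdot 3^{j}$. This yields an $(n+r,4)$ code $C'$ with
\[
 r=\left\lceil \log_2 \tfrac{N}{8}\right\rceil+1, \qquad N'=N+2\left\lceil \tfrac{N}{8}\right\rceil\ \ge\ N\ \ge\ 40\cdot 3^{(n-8)/3}.
\]
In particular the output length already exceeds the target, so the stated bound does not even use the length increase built into Theorem~\ref{Thm1}; all of the work is in controlling the output dimension $n+r$.

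The crux is to show $n+r\le \lfloor 1.53\,n\rfloor$ for every $n\ge 6$; once this holds, padding $C'$ with constant coordinates up to dimension $\lfloor 1.53\,n\rfloor$ gives $K(\lfloor 1.53\,n\rfloor,4)\ge 40\cdot 3^{(n-8)/3}$. Writing $\lfloor 1.53\,n\rfloor=n+\lfloor 0.53\,n\rfloor$, this is equivalent to $\lceil\log_2\frac{N}{8}\rceil+1\le\lfloor 0.53\,n\rfloor$. Since $\log_2 N$ grows in $n$ with slope $\frac{1}{3}\log_2 3=0.5283\ldots$, just under $0.53$, the inequality is very tight, and I expect to prove it by separating the three residue classes of $n$ modulo $3$ (so that $3^{(n-8)/3}$ is a fixed constant times an integer power of $3$ and the ceiling simplifies), verifying the small base dimensions directly and using the accumulating slack $(0.53-0.5283)\,n$ for large $n$. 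This tightness is the main obstacle: any loss of a full unit in estimating the ceiling would push the dimension to $\lfloor 1.53\,n\rfloor+1$ and invalidate the bound.

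Finally I would derive the second inequality from the first. Since $K(\cdot,4)$ is nondecreasing in the dimension (a code embeds into any larger cube by fixing the extra coordinates), and since $1.53\cdot 0.6535<1$, choosing $n=\lfloor 0.6535\,m\rfloor$ gives $\lfloor 1.53\,n\rfloor\le m$, whence $K(m,4)\ge K(\lfloor 1.53\,n\rfloor,4)\ge 40\cdot 3^{(\lfloor 0.6535\,m\rfloor-8)/3}$, as claimed.
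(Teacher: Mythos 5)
There is a genuine gap in your first step, and it is fatal as written. You propose to secure $(n,3)$ circuit codes of length at least $40\cdot 3^{(n-8)/3}$ and then observe that Theorem~\ref{Thm1} preserves length, so ``the stated bound does not even use the length increase built into Theorem~\ref{Thm1}.'' But no such spread-3 codes are available: the tripling recursion you invoke (Construction S5 applied to the base codes in dimensions $6,7,8$ of lengths $16,24,32$) produces lengths $16\cdot 3^p$, $24\cdot 3^p$, $32\cdot 3^p$ in dimensions $6+3p$, $7+3p$, $8+3p$, which is exactly the paper's Lemma~\ref{Claim1} giving $32\cdot 3^{(n-8)/3}\le N\le \frac{24}{22}\,32\cdot 3^{(n-8)/3}$ --- i.e., at most about $34.9\cdot 3^{(n-8)/3}$, short of your target $40\cdot 3^{(n-8)/3}$. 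Indeed $32\cdot 3^{(n-8)/3}$ is the best known lower bound for $K(n,3)$ (Table~\ref{LowerBounds}); if codes of length $40\cdot 3^{(n-8)/3}$ existed you would have improved that record as a lemma. The missing idea is precisely the length increase you discard: Theorem~\ref{Thm1} with $k=3$ gives $N'=N+2\lceil N/8\rceil\ge \frac{5}{4}N$, and $\frac{5}{4}\cdot 32=40$. The paper's proof runs exactly this way: the \emph{lower} bound $N\ge 32\cdot 3^{(n-8)/3}$ feeds the $5/4$ amplification to reach $40\cdot 3^{(n-8)/3}$, while the \emph{upper} bound $N\le \frac{24}{22}\,32\cdot 3^{(n-8)/3}$ controls $r\le \lfloor \log_2\frac{N}{8}\rfloor+2\le .53n$ (using $2^{.53}>3^{1/3}$), whence $n+r\le\lfloor 1.53n\rfloor$.

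The rest of your outline is sound and in places cleaner than the paper. You correctly identified the key subtlety that $r$ depends on the \emph{actual} input length, not merely a lower bound for it, which is exactly why the paper's Lemma~\ref{Claim1} supplies two-sided control; your residue-class tripling is the paper's construction; your slope computation $\frac{1}{3}\log_2 3\approx 0.5283<0.53$ is the right reason the dimension count closes; and your derivation of the second inequality (choose $n=\lfloor 0.6535\,m\rfloor$, use $1.53\cdot 0.6535<1$ and monotonicity of $K(\cdot,4)$ under padding) is more careful than the paper's informal change of variables $u=1.53n$. If you replace your step 1 by the available bound $N\ge 32\cdot 3^{(n-8)/3}$ and restore the factor $\frac{5}{4}$ from $N'=N+2\lceil N/8\rceil$, your argument becomes a correct proof along the paper's lines; note also that demanding longer inputs as you did would additionally have inflated $\log_2\frac{N}{8}$ by $\log_2\frac{5}{4}$, tightening the already delicate inequality $r\le .53n$ in the wrong direction.
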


\section{Previous Constructions and Bounds}
We begin by surveying the theoretical lower bounds for $K(n,k)$ and some of the most important constructions used in their derivation.  Exact values for $K(n,k)$ are known for only a few special cases, given in Table \ref{ExactBounds}.
\begin{table}[H]
\caption{Exact values for $K(n,k)$.}
\label{ExactBounds}
\centering
\begin{tabular} {l l l}
$K(n,k)=2n$ & for $n<\lfloor \frac{3k}{2} \rfloor+2$ & (See \cite{Singleton})\\
$K(\lfloor \frac{3k}{2} \rfloor +2,k)=4k+6$ & for $k$ even & (See \cite{Douglas})\\
$K(\lfloor \frac{3k}{2} \rfloor +2,k)=4k+4$ & for $k$ odd & (See \cite{Douglas})\\
$K(\lfloor \frac{3k}{2} \rfloor +3,k)=4k+8$ & for $k$ odd $\ge 9$ & (See \cite{Douglas})\\
\end{tabular}
\end{table}

The following constructions apply for a wide variety of $(n,k)$ combinations.  
Here we state the ``result'' of each construction and refer the reader to the original paper for the precise construction details.  

\begin{constr}[Singleton \cite{Singleton}]
\label{Construction1}
Let $C$ be an $(n,k)$ circuit code with length $N$.  Then there exists an $(n+1,k)$ circuit code $C'$ with length $N'=N+2\lfloor \frac{N}{2k}\rfloor$.
\end{constr}

\begin{constr}[Singleton \cite{Singleton}]
\label{Construction2}
Let $C$ be an $(n,k)$ circuit code with length $N$, and $k\ge 3$.  Then there exists an $(n+2,k)$ circuit code $C'$ with length $N'=N+4\lfloor \frac{N}{2(k-1)}\rfloor$.
\end{constr}

\begin{constr}[Singleton \cite{Singleton}]
\label{Construction3}
Let $C$ be an $(n,k)$ circuit code with length $N$ for $k\ge 3$ and $k$ odd.  Then there exists an $(n+\frac{k+1}{2},k)$ circuit code $C'$ with length $N'=N+(k+1)\lfloor\frac{N}{k+1}\rfloor$.
\end{constr}

\begin{constr}[Singleton \cite{Singleton}]
\label{Construction4}
Let $C$ be an $(n,k)$ circuit code of length $N$ with $k\ge2$ and $k$ even.  Then there exists an $(n+\frac{k+2}{2},k)$ circuit code $C'$ of length $N'=N+(k+2)\lfloor \frac{N}{k+1}\rfloor$. 
\end{constr}

\begin{constr}[Deimer \cite{Deimer}]
\label{Construction5}
Let $C$ be an $(n+1,k+1)$ circuit code of length $N$.  Then there exists an $(n,k)$ circuit code $C'$ of length $N'\ge N-\lfloor \frac{N}{n+1} \rfloor$.
\end{constr}

\begin{constr}[Klee \cite{Klee}]
\label{Construction6}
Let $k$ be even and let $2\le n_1\le n_2$.  Suppose $C^1$ is an $(n_1,k-1)$ circuit code of length $N^1\ge 2k$ where $N^1$ is divisible by $k$, and suppose $C^2$ is an $(n_2,k)$ circuit code with length $N^2\ge 2k$.  If $k=2$ there exists an $(n_1+n_2,k)$ circuit code $C^3$ of length $N^3=\frac{N^1N^2}{k}$.  If $k\ge 4$ there exists an $(n_1+n_2+1,k)$ circuit code $C^3$ of length $N^3=\frac{N^1(N^2+2)}{k}$.
\end{constr}

These constructions result in the following lower bounds for $K(n,k), k\ge 3$.  
\begin{table}[H]
\caption{Lower bounds for $K(n,k)$}
\label{LowerBounds}
\centering
\begin{tabular}{l l l}
$K(n,2)\ge \frac{77}{256}2^n$ & & (See \cite{AK})\\
$K(n,3)\ge 32\cdot3^{(n-8)/3}$ & for $n\ge 6$ & (See \cite{Singleton}) \\
$K(n,k)\ge (k+1)2^{\lfloor 2n/(k+1) \rfloor -1}$ & for $k$ odd and $\lfloor \frac{2n}{k+1} \rfloor \ge 2$ & (See \cite{Singleton})\\
$K(n,4) \succ \delta^n$ & for $0 < \delta <3^{1/3}$ & (See \cite{Klee})\\
$K(n,k) \succ \delta^n$ & for $k$ even and $0<\delta < 4^{1/k}$ & (See \cite{Klee})\\
$K(n,k)\gtrsim 4^{n/(k +1)}$ & for odd $k>3$ & (See \cite{Klee})\\
\end{tabular}  
\end{table}  

The last three inequalities in Table \ref{LowerBounds} are asymptotic bounds, where $f(n)\lesssim g(n)$ means \\${\liminf_{n\rightarrow \infty} g(n)/f(n) >0}$, and $f(n) \prec g(n)$ means ${\lim_{n\rightarrow \infty} g(n)/f(n) = \infty}$.

In addition to the previous constructions, the ``necklace'' construction of Paterson and Tuliani has been particularly important, leading to many new records for $K(n,k)$ \cite{Paterson}.  However, identifying arrangements of necklaces satisfying the conditions of that construction required a backtrack search, limiting the dimensions examined to $n\le 17$. The conditions placed upon the arrangement of necklaces also become more restrictive as $k$ increases, and for the range of dimensions $n$ examined, no suitable arrangements for codes of spread $k\ge 7$ were found \cite{Paterson}.

For $n\le 17$ and $k\le 7$ many of the current records for $K(n,k)$ (reported in Table \ref{EmpiricalLowerBounds}) have been set by computational methods, e.g. exhaustive search \cite{Kochut, Hood}, pruning based approaches \cite{Tuohy}, genetic algorithms \cite{PotterRobinson, Casella, DiazGomez, Kinny}, or other computational approaches \cite{Chebiryak, Wynn, Allison}.

\section{Generating an $(n+r,k+1)$ Circuit Code from an $(n,k)$ Circuit Code}
\subsection{Transition Sequences}
Every vertex of $I(n)$ corresponds to a binary vector of length $n$, so for each circuit $C=(x_1,\ldots,x_N)$ of $I(n)$ we can define a transition sequence $T=(\tau_1,\ldots,\tau_N)$ where $\tau_i$ denotes the position in which $x_i$ and $x_{i+1}$ (or $x_N$ and $x_1$) differ.  Using the convention that $x_1=\vec{0}$ for any circuit, we see that the transition sequence corresponds uniquely to the edges in $C$.  Furthermore, for any two vertices $x,x'$ of an $(n,k)$ circuit code $C$ there are exactly two transition sequences between $x$ and $x'$, corresponding to the two paths between $x$ and $x'$ in $C$, and these sequences are complements in $T$\footnote{Without the convention $x_1=\vec{0}$ it is possible to have $\ge3$ sequences of transition elements which could lead from $x$ to $x'$.  But these cannot simultaneously correspond to valid paths between $x$ and $x'$ in the same circuit code $C$.}  A useful result to which we shall refer is the following.

\begin{lem}[Singleton \cite{Singleton}]
\label{Fact1}
Let $C$ be a circuit code of spread $k$ and length $N\ge 2(k+1)$ with corresponding transition sequence $T$.  Then any $k+1$ cyclically consecutive elements of $T$ are all distinct.
\end{lem}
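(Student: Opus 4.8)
The plan is to argue by contradiction. Suppose some block of $k+1$ cyclically consecutive transitions, which after relabeling the cycle I may take to be $\tau_i,\ldots,\tau_{i+k}$, is \emph{not} all distinct; then there exist indices $a<b$ with $i\le a<b\le i+k$ and $\tau_a=\tau_b$, so that $b-a\le k$. My aim is to exhibit a pair of vertices whose cycle-distance strictly exceeds their hypercube-distance while the latter is still below $k$, contradicting part (2) of the definition of a circuit code.

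First I would consider the two vertices $x_a$ and $x_{b+1}$, joined along the cycle by the sub-arc carrying transitions $\tau_a,\tau_{a+1},\ldots,\tau_b$, an arc of $b-a+1$ edges. Since $b-a+1\le k+1$ and $N\ge 2(k+1)$, the complementary arc has length $N-(b-a+1)\ge k+1\ge b-a+1$, so this sub-arc is the shorter of the two arcs and $d_C(x_a,x_{b+1})=b-a+1$. The inequality $b-a+1\le k+1<N$ also certifies that $x_a$ and $x_{b+1}$ occupy distinct positions on the cycle, hence are genuinely distinct vertices.

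Next I would bound the hypercube distance. Writing $e_p$ for the $p$-th standard basis vector, we have $x_a\oplus x_{b+1}=\bigoplus_{j=a}^{b}e_{\tau_j}$, so $d_{I(n)}(x_a,x_{b+1})$ equals exactly the number of coordinate positions occurring an odd number of times among $\tau_a,\ldots,\tau_b$. Comparing this count against the total number $b-a+1$ of transitions, the repeated value $\tau_a=\tau_b$ forces $d_{I(n)}(x_a,x_{b+1})\le (b-a+1)-2=b-a-1\le k-1$. In particular $d_{I(n)}(x_a,x_{b+1})<k$, whereas $d_C(x_a,x_{b+1})=b-a+1>d_{I(n)}(x_a,x_{b+1})$.

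This is the contradiction sought: part (2) of the definition requires $d_C(x_a,x_{b+1})=d_{I(n)}(x_a,x_{b+1})$ whenever $d_{I(n)}(x_a,x_{b+1})<k$. Hence no block of $k+1$ consecutive transitions can contain a repeat, which proves the claim. I expect the main obstacle to be bookkeeping rather than any deep idea: one must use $N\ge 2(k+1)$ carefully to certify that $b-a+1$ is really the cycle distance (and not the length of the long way around), and one must check that the counting inequality $d_{I(n)}(x_a,x_{b+1})\le (b-a+1)-2$ holds for \emph{any} number of repetitions of the offending coordinate, not merely when it appears exactly twice.
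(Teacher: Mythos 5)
Your proof is correct, and since the paper states this lemma without proof (importing it directly from Singleton \cite{Singleton}), there is no in-paper argument to diverge from: your contradiction argument --- take a repeat $\tau_a=\tau_b$ within a window of $k+1$ consecutive transitions, use $N\ge 2(k+1)$ to certify $d_C(x_a,x_{b+1})=b-a+1$, and count odd-multiplicity coordinates to get $d_{I(n)}(x_a,x_{b+1})\le b-a-1<k$, violating part (2) of the definition --- is exactly the standard proof of this fact. Your care on the two delicate points (the shorter-arc check and the bound $d_{I(n)}\le (b-a+1)-2$ holding for any multiplicity $t\ge 2$ of the repeated coordinate, via $d\le m-t+1$ when $t$ is odd and $d\le m-t$ when $t$ is even) is exactly right.
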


\subsection{A New Circuit Code Construction}
Now we describe the intuition behind the construction used to prove Theorem \ref{Thm1}.  Let $\{t_1,\ldots,t_n\}$ be the set of transition elements present in $T$, the idea is to strategically insert members of a new set of transition elements $\{s_1,\ldots,s_r\}$ (corresponding to adding dimensions to the hypercube) into $T$ so that the resulting transition sequence $T'$ forms a circuit code of spread $k+1$ in dimension $n+r$.  (This is not as simple as inserting $s_1$ after every segment of $k+1$ transitions in $T$, since that does not guarantee that any two vertices with distance $k$ in $I(n+r)$ are separated by no more than $k$ edges (transitions) in the new code, see the Appendix for one such counterexample.)

\begin{constr}
\label{Construction7}
Let $C$ be an $(n,k)$ circuit code of length $N\ge 2(k+1)$ with transition sequence $T=(\tau_1,\ldots,\tau_N)$.  Split $T$ into $T^1=(\tau_1,\ldots,\tau_{N/2})$ and $T^2=(\tau_{N/2+1},\ldots,\tau_N)$.  For $i=1,2$ further divide $T^i$ into $q=\lceil \frac{N}{2(k+1)}\rceil$ segments $T^{i,1},\ldots,T^{i,q}$ where $T^{i,1}$ consists of the first $k+1$ elements of $T^i$, $T^{i,2}$ consists of the next $k+1$ elements, etc.  Only segment $T^{i,q}$ may have $< k+1$ elements.

For $l=1,\ldots,r-1 (=\lceil \log_2 q \rceil)$ insert $s_l$ at the end of segment $T^{i,j}$ if $2^{l-1}$ is the largest power of 2 that divides $j$ , for $j=1,\ldots, q-1$\footnote{In the degenerate case $q=1$, insert $s_r=s_1$ at the end of segment $T^{i,1}$}.  At the end of segment $T^{i,q}$ insert $s_r$.  This yields a new series of segments $T^{\prime i,1},\ldots, T^{\prime i,q}$ where $|T^{\prime i,j}|=|T^{i,j}|+1$.  Combine these transition sequences $T^{\prime 1}=(\tau'_1,\ldots, \tau'_{N/2+q})$, $T^{\prime 2}=(\tau'_{N/2+q+1},\ldots, \tau'_{N+2q})$ into a sequence $T'=(T^{\prime 1},T^{\prime 2})$.  Then $T'$ is the transition sequence of an $(n+r,k+1)$ circuit code $C'$ with length $N'=N+2\lceil\frac{N}{2(k+1)}\rceil$.
\end{constr}

E.g. if $q=10$, this process yields:
\begin{table}[H]
\centering
\begin{tabular} {|c| c c c c c|}
\hline
j&1&2&3&4&5\\
\hline
$T^{\prime i,j}$ & $(T^{i,1},s_1)$ & $(T^{i,2},s_2)$ & $(T^{i,3},s_1)$ & $(T^{i,4},s_3)$ & $(T^{i,5},s_1)$ \\
\hline
j & 6 & 7 & 8 & 9 & 10\\
\hline
$T^{\prime i,j}$ & $(T^{i,6},s_2)$ & $(T^{i,7},s_1)$ & $(T^{i,8},s_4)$ & $(T^{i,9},s_1)$ &$(T^{i,10},s_5)$\\
\hline
\end{tabular}
\end{table}

If $x$ is a vertex of $I(n)$ and $\tilde{n}<n$, we denote by $x^*$ the ``natural'' projection of $x$ onto $I(\tilde{n})$ formed by taking the first $\tilde{n}$ elements of the binary vector $x$.  There is an important relationship between the transition sequence $T'$ from Construction \ref{Construction7} and the transition sequence $T$ of the underlying $(n,k)$ circuit code $C$.

\begin{lem}
\label{ShortSubsequence}
Let $C$ be an $(n,k)$ circuit code satisfying the assumptions of Construction \ref{Construction7} with transition sequence $T$, and let $C'$ be the resulting $(n+r,k+1)$ circuit code with transition sequence $T'$.  Let $x,x'\in C'$ and let $\hat{T}$ be a shortest transition sequence in $T'$ from $x$ to $x'$.  Then $\hat{T}\cap\{t_1,\ldots,t_n\}$ is a shortest transition sequence in $T$ between $x^*$ and $x^{\prime *}\in C$.
\end{lem}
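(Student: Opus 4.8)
The plan is to exploit two structural facts about Construction~\ref{Construction7}: the inserted symbols $s_1,\ldots,s_r$ flip only coordinates $n+1,\ldots,n+r$ and are therefore invisible to the projection $y\mapsto y^{*}$, while the symbols $t_1,\ldots,t_n$ appearing in $T'$ are exactly the entries of $T$ in the same cyclic order. In particular the whole argument is purely combinatorial in the $t$/$s$ pattern of $T'$ and uses only that $C$ is a simple cycle; the spread of $C'$ is not needed.

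First I would show that $\hat T\cap\{t_1,\ldots,t_n\}$ is the transition sequence of a path in $C$ from $x^{*}$ to $x^{\prime *}$. Because $T$ is recovered from $T'$ by deleting every occurrence of an $s_l$, deleting the $s_l$'s from any cyclically contiguous block of $T'$ returns a cyclically contiguous block of $T$; applied to $\hat T$ this makes $\hat T\cap\{t_1,\ldots,t_n\}$ a contiguous block of $T$, hence the transition sequence of one of the two arcs of $C$ joining its endpoints. The endpoints are $x^{*}$ and $x^{\prime *}$ because, starting from $x$, the $s_l$-transitions leave the first $n$ coordinates fixed, so the projected walk begins at $x^{*}$, advances along $C$ precisely on the $t_i$-transitions, and ends at $x^{\prime *}$ (both endpoints lie on $C$ since a prefix sum of $T'$ projects to a prefix sum of $T$).

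The substance of the lemma is that this arc is a shortest one. Let $a=|\hat T\cap\{t_1,\ldots,t_n\}|$ and let $\alpha$ be the number of $s_l$'s in $\hat T$; let $b,\beta$ be the corresponding counts on the complementary arc, so that $a+b=N$ and $\alpha+\beta=2q$. As $\hat T$ is shortest in $T'$ we have $a+\alpha\le N'/2=N/2+q$, and the desired conclusion $a\le N/2$ is exactly the statement that the projected arc is no longer than its complement. I would prove this by contraposition: if $a>N/2$ then $b\le N/2-1$, and it is enough to show that any arc carrying fewer than $N/2$ of the $t_i$-transitions contains at most $q$ of the $s_l$-transitions, since then $a+\alpha=N'-(b+\beta)\ge N'-(N/2-1)-q=N'/2+1$, contradicting $a+\alpha\le N'/2$.

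Establishing that last count is where the equal split of $T$ into $T^1,T^2$ is essential, and I expect it to be the main obstacle. Regard $C'$ cyclically as $2q$ maximal runs of $t_i$-transitions (the images of the segments $T^{i,j}$) separated by single $s_l$-transitions; every run has size $k+1$ except the two images of $T^{1,q}$ and $T^{2,q}$, each of size $m=N/2-(q-1)(k+1)\le k+1$. An arc meeting $\beta$ of the $s_l$-transitions contains the $\beta-1$ complete runs lying strictly between its first and last $s_l$-transition, so it carries at least the minimum total size of $\beta-1$ cyclically consecutive runs. Crucially the two short runs occupy antipodal positions, exactly $q$ apart among the $2q$ runs, because $T^1$ and $T^2$ have equal length; hence no window of $q$ consecutive runs contains both, and the minimum total size of $q$ consecutive runs is exactly $m+(q-1)(k+1)=N/2$. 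Taking $\beta=q+1$ then gives that an arc with at least $q+1$ of the $s_l$-transitions carries at least $N/2$ of the $t_i$-transitions, which is the contrapositive of the required count. Finally I would dispose of the degenerate cases: if $x^{*}=x^{\prime *}$ then $x,x'$ flank one $s_l$-transition, the projected sequence is empty and $a=0$; the tie $a+\alpha=N'/2$ is already covered by the strict inequality above; and $q=1$ forces $N=2(k+1)$ and $m=k+1$, where the same count again yields $N/2$.
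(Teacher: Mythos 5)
Your proof is correct, and its key step takes a genuinely different route from the paper's. After the same preliminary reduction (which you verify in more detail than the paper does: the $t$-part of $\hat{T}$ is a cyclically contiguous block of $T$ joining $x^{*}$ to $x^{\prime *}$, so the two arcs of $C'$ contain the two arcs of $C$), the paper obtains $|\hat{T}\cap\{t_1,\ldots,t_n\}|\le|\hat{T}^C\cap\{t_1,\ldots,t_n\}|$ by a pairing argument: since the insertion pattern in $T^{\prime 1}$ and $T^{\prime 2}$ is identical, positions of $T'$ spaced $|T'|/2=N/2+q$ apart have the same type ($t$ versus $s$), and the shorter arc, having length at most $N/2+q$, contains at most one position from each antipodal pair, so mapping each of its $t$-positions to its antipodal partner injects them into the $t$-positions of the complement, giving $|A|\le N/2$ directly. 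You instead argue by contraposition with an exact run count: if the complementary arc carried $\beta\ge q+1$ of the $s$-transitions it would contain at least $q$ full runs, and because the two short runs sit exactly $q$ apart among the $2q$ runs, every window of $q$ consecutive runs carries exactly $(q-1)(k+1)+m=N/2$ of the $t$-transitions, forcing $a+\alpha\ge N'/2+1$ and contradicting minimality of $\hat{T}$. Both arguments rest on the same underlying symmetry (the equal split of $T$ with identical insertion pattern in each half), but yours replaces the paper's one-line antipodal pairing with a quantitative window-sum computation; the pairing is slicker and needs no arithmetic, while your version isolates precisely where the equal split is used (the two short runs being antipodal) and yields the sharper fact that every half-window of runs carries exactly $N/2$ of the $t$-transitions. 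Incidentally, your computation corroborates that the phrase ``spaced $N+q$ apart'' in the paper's proof should read ``spaced $N/2+q$ apart'' (an apparent typo, since $|T'|=N+2q$), and your treatment of the degenerate cases ($x^{*}=x^{\prime *}$, the tie $a+\alpha=N'/2$, and $q=1$) is sound.
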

\begin{proof}
Let $x,x'\in C'$ and let $\hat{T}$ be a transition sequence between them in $T'$, then $\hat{T}^C$ (the complement of $\hat{T}$ in $T'$) is also a transition sequence from $x$ to $x'$.  Furthermore, $x^*$ and $x^{\prime *}\in C$.  It is necessary that the subsequence $\hat{T}\cap \{t_1,\ldots,t_n\}$ (a segment of $T$) is a transition sequence in $T$ from $x^*$ to $x^{\prime *}$.  Since there are only two such sequences we conclude that $\hat{T}$ contains as a subsequence one of the transition sequences in $T$ from $x^*$ to $x^{\prime *}$ and $\hat{T}^C$ contains as a subsequence the other one.

Now suppose $|\hat{T}|\le|\hat{T}^C|$, then $|\hat{T}|\le N+q$ and $\hat{T}$ contains no transitions spaced $\ge N+q$ apart in $T'$.  For any $\tau'_i, \tau'_j \in T'$ spaced $N+q$ apart, $\tau'_i\in \{t_1,\ldots,t_n\} \iff \tau'_j\in \{t_1,\ldots,t_n\}$.  Thus $|\hat{T}|\le|\hat{T}^C|$ implies $|\hat{T}\cap \{t_1,\ldots,t_n\}|\le |\hat{T}^C\cap \{t_1,\ldots,t_n\}|$.  Hence the shorter transition sequence between $x$ and $x'$ (in $T'$) also contains the shorter transition sequence between $x^*$ and $x^{\prime *}$ (in $T$).
\end{proof}

Figure \ref{Fig1} illustrates this, showing a $(3,2)$ circuit code $C$ with transition sequence $T=(2,1,3,2,1,3)$ (on the left) and the $(4,3)$ circuit code $C'$ (on the right) with transition sequence $T'=(2,1,3,4,2,1,3,4)$ resulting from Construction \ref{Construction7}.  E.g. for $x=1100$ and $x'=1011$ the shortest path in $C'$ between $x$ and $x'$, indicated by dashed lines, ``contains as a subpath'' the shortest path in $C$ between $x^*=110$ and $x^{\prime *}=101$.

 \begin{figure}[H]
  \caption{A $(3,2)$ Circuit Code and a $(4,3)$ Circuit Code}
  \label{Fig1}
 \centering
 \scalebox{0.6}
 {
 \begin{tikzpicture}[scale=1.25]
	 \tikzstyle{vertex}=[circle,minimum size=20pt,inner sep=0pt]
	 \tikzstyle{heavyedge} = [draw,line width=2.25pt,-]
	 \tikzstyle{mediumedge}=[draw,line width = 2.5pt,dotted,black]
	 \tikzstyle{edge} = [draw,-,black]
	 
	 \node[vertex] (v0) at (0,0) {$000$};
	 \node[vertex] (v1) at (0,1) {$010$};
	 \node[vertex] (v2) at (1,0) {$100$};
	 \node[vertex] (v3) at (1,1) {$110$};
	 \node[vertex] (v4) at (-1,-1) {$001$};
	 \node[vertex] (v5) at (-1,2) {$011$};
	 \node[vertex] (v6) at (2,-1) {$101$};
	 \node[vertex] (v7) at (2,2) {$111$};
	 
	 \draw[edge] (v0)--(v1)--(v3)--(v2)--(v0);
	 \draw[edge] (v4)--(v5)--(v7)--(v6)--(v4);
	 \draw[edge] (v0)--(v1)--(v5)--(v4)--(v0);
	 \draw[edge] (v2)--(v3)--(v7)--(v6)--(v2);

	 \node[vertex] (u0) at (5,0) {$0000$};
	 \node[vertex] (u1) at (5,1) {$0100$};
	 \node[vertex] (u2) at (6,0) {$1000$};
	 \node[vertex] (u3) at (6,1) {$1100$};
	 \node[vertex] (u4) at (4,-1) {$0010$};
	 \node[vertex] (u5) at (4,2) {$0110$};
	 \node[vertex] (u6) at (7,-1) {$1010$};
	 \node[vertex] (u7) at (7,2) {$1110$};
	 
	 \draw[edge] (u0)--(u1)--(u3)--(u2)--(u0);
	 \draw[edge] (u4)--(u5)--(u7)--(u6)--(u4);
	 \draw[edge] (u0)--(u1)--(u5)--(u4)--(u0);
	 \draw[edge] (u2)--(u3)--(u7)--(u6)--(u2);
	 
	 \node[vertex] (t0) at (8.5,-.67) {$0001$};
	 \node[vertex] (t1) at (8.5,.33) {$0101$};
	 \node[vertex] (t2) at (9.5,-.67) {$1001$};
	 \node[vertex] (t3) at (9.5,.33) {$1101$};
	 \node[vertex] (t4) at (7.5,-1.67) {$0011$};
	 \node[vertex] (t5) at (7.5,1.33) {$0111$};
	 \node[vertex] (t6) at (10.5,-1.67) {$1011$};
	 \node[vertex](t7) at (10.5,1.33) {$1111$};
	 
	 \draw[edge] (t0)--(t1)--(t3)--(t2)--(t0);
	 \draw[edge] (t4)--(t5)--(t7)--(t6)--(t4);
	 \draw[edge] (t0)--(t1)--(t5)--(t4)--(t0);
	 \draw[edge] (t2)--(t3)--(t7)--(t6)--(t2);
	 
	\draw[edge](u0)--(t0);
	\draw[edge](u1)--(t1);
	\draw[edge](u2)--(t2);
	\draw[edge](u3)--(t3);
	\draw[edge](u4)--(t4);
	\draw[edge](u5)--(t5);
	\draw[edge](u6)--(t6);
	\draw[edge](u7)--(t7);
	
	\draw[heavyedge](v0)--(v1)--(v3);
	\draw[heavyedge](v6)--(v4)--(v0);
	\draw[mediumedge](v3)--(v7)--(v6);
	
	\draw[heavyedge](u0)--(u1)--(u3);
	\draw[mediumedge](u3)--(u7)--(t7)--(t6);
	\draw[heavyedge](t6)--(t4)--(t0)--(u0);

 \end{tikzpicture}
 }
 \end{figure}
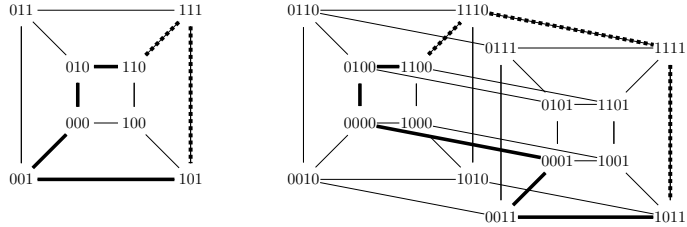

We now have everything we need to proceed to the main proof.

\begin{proof}[Proof of Theorem \ref{Thm1}]
Let $C$ be an $(n,k)$ circuit code with length $N\ge 2(k+1)$ and transition sequence $T$.  Apply Construction \ref{Construction7} to get a new transition sequence $T'$.  Clearly $T'$ forms a circuit $C'$ as each transition element appears an even number of times.  So we only need to show that $C'$ has spread $k+1$, and by Lemma \ref{Fact2} it thus suffices to show for all vertices $x, x'\in C'$ that $d_{C'}(x,x')\ge k+1 \Rightarrow d_{I(n+r)}(x,x')\ge k+1$.

Suppose that $x$ and $x'$ are vertices of $C'$ with $d_{C'}(x,x')\ge k+1$.  Let $\hat{T}$ denote the segment of $T'$ that is the transition sequence between $x$ and $x'$ (the shorter segment).  (So $\hat{T}$ may ``start'' in $T^{\prime 1}$ and end in $T^{\prime 2}$, or the reverse, or may be entirely contained in $T^{\prime i}$.)  Finally, let $A=\hat{T}\cap \{t_1,\ldots, t_n\}$ and $B=\hat{T} \cap \{s_1,\ldots, s_r\}$, thus $d_{C'}(x,x')=|A|+|B|$.

If $|B|=0$ then $|A|=k+1$.  In this case, by Lemma \ref{Fact1}, $d_{I(n)}(x^*,x^{\prime *})=k+1$, so $d_{I(n+r)}(x,x')=k+1$.  So we may assume $|B|>0$.

Let $\tau'_{\alpha}$ be the first element of $\hat{T}$ in $B$ and let $\tau'_{\beta}$ be the last element of $\hat{T}$ in $B$\footnote{Here ``first'' and ``last'' are with respect to ordering in $\hat{T}$, not $T'$.  So it is possible that $\alpha > \beta$.}.  If $|B|=1$ then an element of $\{s_1,\ldots,s_r\}$ appears an odd number of times in $\hat{T}$.  If $\tau'_{\alpha}$ and $\tau'_{\beta}$ are in the same segment $T^{\prime i}$ (i.e. $1\le \alpha , \beta \le N/2+q$ or $N/2+q+1 \le \alpha , \beta \le N+2q$) then let $s_p\in B$ have the maximum index (from $1$ to $r$) of all elements in $B$.  Observe that $s_p$ must occur exactly once in $\hat{T}$, otherwise (by construction) $s_w$ appears in $\hat{T}$ between two appearances of $s_p$ for some $w>p$, but this contradicts the definition of $s_p$.  If $\tau'_{\alpha}\in T^{\prime 1}$ and $\tau'_{\beta}\in T^{\prime 2}$ then $s_r\in \hat{T}$.  Since $s_r$ occurs exactly twice in $T'$ and in positions spread $N/2+q$ ($=|T'|/2$) apart, $s_r$ can only occur once in $\hat{T}$ (else the other sequence connecting $x$ and $x'$ would be shorter).  A similar analysis holds if $\tau'_{\alpha}\in T^{\prime 2}$ and $\tau'_{\beta}\in T^{\prime 1}$, and thus in all cases some $s_l \in \hat{T}$ occurs an odd number of times.

Now $d_{I(n+r)}(x,x')=d_{I(n)}(x^*,x^{\prime *})$+\# of members of $\{s_1,\ldots,s_r\}$ occuring an odd \# of times in $\hat{T}$.  If $d_{I(n)}(x^*,x^{\prime *})\ge k$ this is $\ge k+1$.  If $d_{I(n)}(x^*,x^{\prime *})<k$ then by Lemma \ref{ShortSubsequence} and since $C$ has spread $k$, $|A|=d_C(x^*,x^{\prime *})=d_{I(n)}(x^*,x^{\prime *})$.  Furthermore, each element of $A$ and $B$ appears exactly once (by Lemma \ref{Fact1}, the observation that $|A|\le k \Rightarrow |B|\le 2$, and the fact that consecutive elements of $B$ differ when $|\hat{T}|\le N+q$).  Thus $d_{I(n+r)}(x,x')=|A|+|B|=d_{C'}(x,x')\ge k+1$.
\end{proof}

\section{A New Lower Bound for $K(n,4)$}
Singleton \cite{Singleton} remarks that for $k\ge 4$ and even, the best lower bound available for $K(n,k)$ seems to be applying the third lower bound given in Table \ref{LowerBounds} to $K(n,k+1)$ (as every circuit code of spread $k+1$ is also a circuit code of spread $k$).  In particular, for $k=4$ this gives $K(n,4)\ge 6\cdot2^{\lfloor 2n/6\rfloor -1}$.  Construction \ref{Construction6} leads to a stronger asymptotic bound, $K(n,4)\succ \delta^n$ for $0<\delta<3^{1/3}$.  We will now prove that Theorem \ref{Thm1} gives a non-asymptotic lower bound that is stronger than $6\cdot2^{\lfloor 2n/6\rfloor -1}$ for $n\ge 86$.

First we establish the following claim, our argument is a minor modification of the one given in Chapter 17 of \cite{Grunbaum}.
\begin{lem}
\label{Claim1}
For $n\ge 6$ there exists an $(n,3)$ circuit code $C$ with length $N$ satisfying\\ ${32\cdot3^{(n-8)/3}\le N \le \frac{24}{22}32\cdot3^{(n-8)/3}}$.
\end{lem}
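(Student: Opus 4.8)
The plan is to prove Lemma~\ref{Claim1} by an explicit inductive construction that raises the dimension by $3$ at each step while multiplying the code length by exactly $3$, so that the length stays locked inside the stated window. First I would establish a base case by exhibiting, for each of the three residue classes $n\equiv 6,7,8 \pmod 3$, a concrete $(n,3)$ circuit code of length $N$ satisfying the bound; concretely, one seeds the induction at dimensions $n=6,7,8$ with known short spread-$3$ codes whose lengths are of order $32\cdot 3^{(n-8)/3}$ (e.g.\ the length-$24$ code in dimension $7$ on which $32\cdot 3^{(7-8)/3}=32/3^{1/3}\approx 22.2$, matching the factor $24/22$ in the upper bound). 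The factor $\tfrac{24}{22}$ strongly suggests the base case is a length-$24$ code in dimension $n=7$, with the ratio $24/22$ being exactly the slack between the achieved length and the lower target; I would pin down these seed codes first.

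The inductive step is the heart of the argument, and the natural engine is Construction~\ref{Construction3} (Singleton's construction for odd $k$), specialized to $k=3$: from an $(n,3)$ code of length $N$ it produces an $(n+2,3)$ code of length $N+(k+1)\lfloor N/(k+1)\rfloor = N + 4\lfloor N/4\rfloor$, which when $4\mid N$ is exactly $2N$ in dimension $n+2$. To get the clean factor of $3$ per $3$ dimensions that the bound $3^{(n-8)/3}$ demands, however, I expect the right tool is a composition yielding an $(n+3,3)$ code of length $3N$; this is precisely what the Grün\-baum Chapter~17 argument referenced by the author does, by combining a dimension-raising step with a tripling of length. So my plan is: assuming an $(n,3)$ code of length $N$ in the window, apply the dimension-$+3$, length-$\times 3$ step to obtain an $(n+3,3)$ code of length $N' = 3N$, and then verify the two inequalities $32\cdot 3^{(n+3-8)/3}\le N' \le \tfrac{24}{22}\,32\cdot 3^{(n+3-8)/3}$ directly from $N'=3N$ and the inductive hypothesis on $N$, using that $3^{(n+3-8)/3}=3\cdot 3^{(n-8)/3}$ so both sides of the window also scale by exactly $3$.

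The key verification is that multiplying by $3$ preserves the window: if $L\le N\le \tfrac{24}{22}L$ with $L=32\cdot 3^{(n-8)/3}$, then $3L\le 3N\le \tfrac{24}{22}(3L)$, and $3L$ is exactly the target $L'$ for dimension $n+3$. Thus the window is invariant under the construction \emph{provided} the length-tripling is exact rather than approximate. I would therefore want the inductive step to deliver length exactly $3N$ (not merely $\gtrsim 3N$), which is why the divisibility hypothesis ``$N$ divisible by $k+1=4$'' type conditions in Construction~\ref{Construction3} matter: I must arrange that the relevant floor functions are exact, i.e.\ that $N$ is divisible by $4$ (equivalently by $k+1$) at every stage. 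Since the seed length $24$ is divisible by $4$ and tripling preserves divisibility by $4$, this propagates automatically through the induction.

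\textbf{Main obstacle.} The delicate point is not the arithmetic of the window---that is forced once the length scales exactly by $3$---but securing an inductive step that simultaneously (i)~raises the dimension by exactly $3$, (ii)~multiplies the length by exactly $3$, and (iii)~preserves both the spread-$3$ property and the divisibility needed to keep future floors exact. None of Constructions~\ref{Construction1}--\ref{Construction4} individually gives dimension $+3$ with length $\times 3$, so I expect the real work is in the precise Grün\-baum-style construction (likely a variant of Construction~\ref{Construction3} or a product-type construction akin to Construction~\ref{Construction6}) that achieves this exact scaling, together with the bookkeeping to confirm the resulting transition sequence genuinely has spread $3$. Verifying the spread of the composed code---that no two vertices at cycle-distance $\ge 3$ are closer than $3$ in the hypercube---via the transition-sequence characterization (Lemmas~\ref{Fact2} and~\ref{Fact1}) is where I anticipate spending most of the effort; the ``minor modification'' the author alludes to is presumably exactly the adjustment needed to land the length in this tight two-sided window rather than merely proving the one-sided lower bound $N\ge 32\cdot 3^{(n-8)/3}$ already recorded in Table~\ref{LowerBounds}.
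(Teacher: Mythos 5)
Your overall architecture is exactly the paper's: seed the three residue classes with explicit $(n,3)$ codes at $n=6,7,8$ (lengths $16$, $24$, $32$), apply a Gr\"unbaum-style step that raises dimension by $3$ while tripling the length exactly, and observe that the window $[L,\tfrac{24}{22}L]$ with $L=32\cdot3^{(n-8)/3}$ scales by $3$ and is therefore invariant. Your reading of the constant $\tfrac{24}{22}$ is also right: it is the slack $24/(32\cdot 3^{-1/3})\le 24/22$ coming from the dimension-$7$ seed. However, there is one concrete gap in your bookkeeping. The step the paper invokes (Construction S5 of \cite{Grunbaum}) produces from an $(n,3)$ code of length $N$, in which a chosen transition element $t_i$ occurs $m$ times, an $(n+3,3)$ code of length $N'=N+8m$ in which $t_i$ occurs $3m$ times. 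Exact tripling therefore requires $m=N/4$, i.e.\ the existence of a transition element occurring \emph{exactly} one quarter of the time --- mere divisibility of $N$ by $4$, which is the invariant you propose to propagate, is not sufficient and does not by itself make any floor exact. The invariant that actually closes the induction is ``some $t_i$ has multiplicity exactly $N/4$,'' and it propagates precisely because S5 guarantees $t_i$ occurs $3m=3N/4=N'/4$ times in the new sequence. Correspondingly, the seeds cannot be arbitrary short spread-$3$ codes of the right lengths: the paper exhibits specific transition sequences $T_6,T_7,T_8$ in which the elements $5$, $2$, $8$ occur $4$, $6$, $8$ times respectively, i.e.\ exactly $N/4$ times each.

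Two smaller corrections. First, Construction \ref{Construction3} plays no role and cannot be made to: for $k=3$ it gives dimension $+2$ with length $\times 2$, and no composition of the Singleton steps yields dimension $+3$ with length exactly $\times 3$; the paper goes straight to S5. Second, your anticipated ``main obstacle'' --- verifying spread $3$ of the composed code via Lemmas \ref{Fact2} and \ref{Fact1} --- is not where the work lies: the paper cites S5 as a black box that already outputs a spread-$3$ code, exactly as you are entitled to do, so the entire content of the proof is the multiplicity bookkeeping above plus the (routine) window arithmetic you already carried out.
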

\begin{proof}
Let $C$ be an $(n,3)$ circuit code with transition sequence $T$.  Suppose that $t_i$ occurs $m$ times in $T$.  Construction $S5$ of \cite{Grunbaum} states that there is an $(n+3,3)$ circuit code $C'$ with length $N'=N+8m$, and $t_i$ occurs $3m$ times in the new transition sequence $T'$.  Note that if $N$ is divisible by 4 and $t_i$ appears $\frac{N}{4}$ times in $T$, then $N'=3N$ and $t_i$ appears $3m=\frac{N'}{4}$ times in $T'$.

For $n=6,7,8$ consider the following transition sequences for $(n,3)$ circuit codes.  Note that $|T_6|=16$, $|T_7|=24$, and $|T_8|=32$.  Also, 5 occurs 4 times in $T_6$, 2 occurs 6 times in $T_7$, and 8 occurs 8 times in $T_8$.

\begin{table}[H]
\centering
\begin{tabular}{l}
$T_6=(1,5,2,6,3,5,4,6,1,5,2,6,3,5,4,6)$\\
$T_7=(5,2,6,1,7,2,5,3,6,2,7,4,5,2,6,1,7,2,5,3,6,2,7,4)$\\
$T_8=(5,2,6,8,1,7,2,8,5,3,6,8,2,7,4,8,5,2,6,8,1,7,2,8,5,3,6,8,2,7,4,8)$\\
\end{tabular}
\end{table}

Therefore by Construction S5 we see that for any $p\in \NN$, in dimension $n=6+3p$ there exists an $(n,3)$ code with length $N=\frac{16}{15}32\cdot3^{(n-8)/3}$, in dimension $n=7+3p$ there exists an $(n,3)$ code with length $N=\frac{24}{22}32\cdot3^{(n-8)/3}$, and in dimension $n=8+3p$ there exists an $(n,3)$ code with length $N=32\cdot3^{(n-8)/3}$.
\end{proof}

\begin{proof}[Proof of Theorem \ref{Thm2}]
Theorem \ref{Thm1} implies $K(n+r,4)\ge N+2\lceil \frac{N}{2\cdot4}\rceil\ge \frac{5}{4}N$, where $N\ge 2\cdot4$ is the length of an $(n,3)$ circuit code and $r=\lceil \log_2 \frac{N}{2\cdot4}\rceil+1$.  From Lemma \ref{Claim1} we know that for $n\ge 6$ there exists a circuit code $C$ of length $N$ and $32\cdot3^{(n-8)/3}\le N \le \frac{24}{22} 32\cdot3^{(n-8)/3}$.  Using this code we have $K(n+r,4)\ge 40\cdot3^{(n-8)/3}$ and $r\le\lfloor \log_2 \frac{N}{2\cdot4}\rfloor+2$.

Now $2^{.53}>3^{1/3}$ so $r\le 2+\lfloor \log_2 \frac{24}{22}4\cdot3^{-8/3}\cdot2^{.53n}\rfloor\le .53n$.  Hence $K(\lfloor 1.53n\rfloor,4)\ge 40\cdot3^{(n-8)/3}$ for $n\ge6$.  And making the change of variables $u=1.53n$ we get $K(\lfloor u\rfloor,4)\ge 40\cdot3^{(\lfloor .6535u\rfloor-8)/3}$.
\end{proof}

A simple analysis shows that the lower bound of Theorem \ref{Thm2} exceeds $6\cdot2^{\lfloor 2n/6\rfloor -1}$ for $n\ge 86$.

\section{Computational Results}
\subsection{Methodology}
The efficacy of Construction \ref{Construction7} was tested by applying it to circuit codes of spreads 2-9 in dimensions 3-30\footnote{Circuit code construction and testing code is available from the author upon request.}.  Table \ref{EmpiricalLowerBounds} lists the greatest lower bound found for each $(n,k)$ combination.  The table was constructed as follows.  For spreads 2-7 and dimensions 3-30 we seeded the table with empirical results from \cite{Singleton, Deimer, Hood, Palombo, Allison} which collectively survey all empirical records of which we are aware, for spreads 8 and higher we seeded the table by using the exact bounds of Table \ref{ExactBounds} and the non-asymptotic lower bounds of Table \ref{LowerBounds}.  

Next, we applied Constructions \ref{Construction1} - \ref{Construction4} (collectively the ``Singleton'' constructions), the construction of Deimer (Construction \ref{Construction5}), and the construction of Klee (Construction \ref{Construction6}).  Because these constructions were applied sequentially we iterated applying the constructions until there was no improvement in any entry of the table.  To this ``initial'' table we then applied Construction \ref{Construction7} to the column corresponding to codes of spread $k$, replacing the appropriate entry in column $k+1$ of the table if a larger lower bound was found.  Each time after applying Construction \ref{Construction7} to codes of spread $k$ we repeated the iterative application of the constructions of Singleton, Deimer, and Klee to propagate any further improvements in the lower bounds before applying the construction to codes of spread $k+1$.  Finally, after applying the construction to codes of all spreads we iteratively applied the constructions from Singleton, Deimer, and Klee once more.

Construction \ref{Construction6} was applied to our table as follows.  Let $C$ be an $(n,k)$ circuit code with length $N>2(k+1)^2$, and let $T=(\tau_1,\ldots,\tau_N)$ be its transition sequence.  Split $T$ into $T^1=(\tau_1,\ldots,\tau_{N/2})$, $T^2=(\tau_{N/2+1},\ldots,\tau_N)$ and subdivide $T^i$ into $q=\lceil \frac{N}{2(k+1)}\rceil$ segments $T^{i,1},\ldots,T^{i,q}$ of length $\le k+1$ as in Construction \ref{Construction7} (where only segment $T^{i,q}$ may have length $<k+1$).  Note that $q>k+1$.  For $i=1,2$ create a new transition sequence $T^{\prime i}$ by inserting the new transition element $t_{n+1}$ at the end of the first $p=(k+1)\lceil\frac{N}{2(k+1)}\rceil-\frac{N}{2}$ segments $T^{\prime i,j}$ of $T^{\prime i}$.  Finally combine $T^{\prime 1}, T^{\prime 2}$ into $T'=(T^{\prime 1},T^{\prime 2})$.  Observe that $t_{n+1}$ occurs an even number of times in $T'$ and any two occurences of $t_{n+1}$ are separated by $\ge k+1$ other transition elements.  The resulting circuit code $C'$ has dimension $n+1$ and spread $k$ (but not necessarily spread $k+1$) and length $N'=2(k+1)\lceil\frac{N}{2(k+1)}\rceil$, satisfying the divisibility criterion of Construction \ref{Construction6}.  Because this method does not generate all $(n+1,k)$ circuit codes with length divisible by $k+1$, we also indicate in Table \ref{EmpiricalLowerBounds} when an entry exceeds the asymptotic lower bounds from Table \ref{LowerBounds} which are derived from Construction \ref{Construction6}.

\begin{table}
\centering
\caption{Lower Bounds for $K(n,k)$ (Previous Best Bound in Parentheses)}
\label{EmpiricalLowerBounds}
\csvautotabular{techReportRecords.csv}
\caption*{
a = previous record was also exceeded ``directly'' by applying new construction\\
b = record exceeds Klee's asymptotic lower bound\\
c = value known to be optimal}
\end{table}

\subsection{Discussion of Computational Results}
Our construction found several new circuit codes for spreads of 7 and 8.  Because codes of spreads 2-7 and dimensions 3-30 have been well-studied (see \cite{Hood, Palombo} for surveys) the improvements noted in Table \ref{EmpiricalLowerBounds} for codes of spread 7 are perhaps the most significant.  The new lower bound for a $(22,7)$ code results from applying Construction \ref{Construction7} to the $(17,6)$ code with length 204 found by Paterson and Tuliani \cite{Paterson}, and the full code is given in the Appendix.  The new lower bounds for codes of spread 7 and dimension $>$ 22 come about by applying the Singleton and Deimer constructions to the $(22,7)$ code.

The chief advantage of our construction is that it is very easy to implement, allowing the better studied codes of smaller spreads to be leveraged to generate codes of larger spreads, where the spread is too large for computer search. This adds another construction (in addition to Constructions \ref{Construction1} - \ref{Construction6}) to generate non-trivial codes for large spreads.  As the results for spreads $k=7,8$ indicate, the construction is additive to Constructions \ref{Construction1}-\ref{Construction6}.  However the results for spread $k+1=9$ indicate that the success of this approach relies on good starting codes for spread $k$.

\section{Conclusions}
In this note we presented a simple method for constructing a circuit code of spread $k+1$ from a circuit code of spread $k$.  This construction leads to record code lengths for circuit codes of spread $k=7,8$ and dimension $22\le n\le 30$.  We also derived a new lower bound on the length of circuit codes of spread 4, which improves upon the current bound for $n\ge 86$.

Some of the records in Table \ref{EmpiricalLowerBounds} stood for at least 32 years before being broken by the method described here, however we believe that further improvements of the lower bounds on $K(n,k)$ are still possible.  In  particular, Construction 5 from \cite{Singleton} describes how to extend an $(n,7)$ circuit code under certain conditions on how close a specific pair of transition elements appear in the transition sequence.  While applying that construction directly does not improve the lower bounds in the table (we tried!) the transition sequences arising from combining Construction \ref{Construction7} with the construction method of \cite{Paterson} are highly structured, suggesting that a modification of that approach may succeed.
\\
\\
\textbf{Acknowledgements:}
The author thanks Stephen Chestnut and Eric Harley for generously reviewing earlier versions of this paper, and for many helpful suggestions which greatly improved the final version.

\bibliographystyle{unsrt}
\bibliography{CircuitCodes}

\appendix
\section{An $(n,k)$ Circuit Code that Cannot be Trivially Extended to an $(n+1,k+1)$ Circuit Code}
The following transition sequence from \cite{Klee} results in a $(6,2)$ circuit code of length 24:\\
$T=(1, 2, 6, 4, 5, 6, 1, 3, 5, 4, 6, 5, 1, 2, 6, 4, 5, 6, 1, 3, 5, 4, 6, 5)$.  

This code cannot be extended to a $(7,3)$ circuit code by inserting the new transition element 7 after the end of every segment of $T$ of length 3.  There are 3 potential new transition sequences $T'$, starting with: $(7,1,2,6)$, or $(1,7,2,6)$, or $(1,2,7,6)$ and inserting 7 after the end of every segment in $T$ of length 3 after the initial appearance of 7.

The reader can verify that none of the 3 potential transition sequences arising in this way has spread 3.

\section{Transition Sequence for a $(22,7,234)$ Circuit Code}
Table \ref{TransitionElements} lists the transition elements for the (22,7) circuit code of length 234 generated using the construction of Theorem \ref{Thm1}.  This is arguably the most important code we discovered as all other new $(n,7)$ codes and all but one new $(n,8)$ code are built using this code in conjunction with other constructions.  Unlike the notation in \cite{Paterson} we consider the positions of the vectors in $I(n)$ (and thus the range for transition elements) to be $1\text{ to }n$ not $0\text{ to }(n-1)$.  Transition sequences for other codes are available from the authors upon request.

\begin{table}[H]
\centering
\caption{Transition Sequence for $(22,7,234)$ Circuit Code}
\label{TransitionElements}
\begin{tabular}{l l}
\hline\\
$(n,k, N)$ & Transition Elements (read row-wise)\\
\hline \hline\\
(22,7,234) & (6	12	4	3	16	8	9	18	17	13	4	12	15	16	5	19	14	13	9	1	2	10	6	18	14	5	8	9	15	7\\
& 6	20	2	11	12	3	16	7	15	18	1	2	8	17	16	12	4	19	5	13	9	17	8	11	12	18	1	10	9	5\\
& 14	15	6	21	2	10	1	4	5	11	3	18	2	15	7	8	16	12	3	19	11	14	15	4	13	12	8	18	17	1\\
& 9	5	13	4	7	20	8	14	6	5	1	10	11	18	2	15	6	14	17	1	7	19	16	15	11	3	22	4	12	8\\
& 16	7	10	11	18	17	9	8	4	13	14	5	19	1	9	17	3	4	10	2	18	1	14	6	7	15	11	2	20	10\\
& 13	14	3	12	11	7	18	16	17	8	4	12	3	6	19	7	13	5	4	17	9	10	18	1	14	5	13	16	17	6\\
& 21	15	14	10	2	3	11	7	18	15	6	9	10	16	8	7	19	3	12	13	4	17	8	16	18	2	3	9	1	17\\
& 13	5	20	6	14	10	1	9	12	13	18	2	11	10	6	15	16	7	19	3	11	2	5	22)\\
\end{tabular}
\end{table}

\end{document}